\DeclareMathOperator*{\argmin}{arg\,min}
\setlist[enumerate]{leftmargin=.5in}
\setlist[itemize]{leftmargin=.5in}
\crefname{hypothesis}{Hypothesis}{Hypotheses}
\title{Follow the bisector: a simple method for multi-objective optimization\footnotemark[1] 
\footnotetext[1]{Submitted to the editors July 13, 2020.
\funding{This study was supported by the Ministry of Education and Science of the Russian
Federation grant 14.756.31.0001}}}
\author{Alexandr Katrutsa\footnotemark[2]
\and Daniil Merkulov\footnotemark[2] \and Nurislam Tursynbek\footnotemark[2]~\footnotemark[3]~\footnotemark[4] \and Ivan Oseledets\footnotemark[2]
\footnotetext[2]{Skolkovo Institute of Science and Technology 
  (\email{aleksandr.katrutsa@phystech.edu}, \email{daniil.merkulov@skolkovotech.ru}, \email{nurislam.tursynbek@skoltech.ru}, \email{i.oseledets@skoltech.ru}).}
 \footnotetext[3]{Higher School of Economics}
\footnotetext[4]{Huawei Moscow Research Center}
}
\begin{document}
\maketitle

\begin{abstract}
    This study presents a novel Equiangular Direction Method (EDM) to solve a multi-objective optimization problem.
    We consider optimization problems, where multiple differentiable losses have to be minimized.
    The presented method computes descent direction in every iteration to guarantee equal relative decrease of objective functions.
    This descent direction is based on the normalized gradients of the individual losses. 
    Therefore, it is appropriate to solve multi-objective optimization problems with multi-scale losses.
    We test the proposed method on the imbalanced classification problem and multi-task learning problem, where standard datasets are used. EDM is compared with other methods to solve these problems. 
\end{abstract}

\begin{keywords}
  Multi-objective optimization, Pareto front, Pareto stationarity, multi-task learning, imbalanced classification 
\end{keywords}

\begin{AMS}
  90C29
\end{AMS}

\section{Introduction}
Many problems in machine learning and deep learning require the minimization of different loss functions simultaneously. 
Such problems appear, for example, in multi-task learning~\cite{zhang2017survey}, where the single model is trained to be good at different (possibly conflicting) tasks.
The survey of machine learning problem statements including multiple objective functions is presented in~\cite{jin2006multi}.
In particular, an imbalanced classification problem can be solved efficiently by introducing several objective functions~\cite{soda2011multi}.
Also, the feature selection problem is naturally multi-objective since the selected subset of features has to be non-redundant and gives accurate and stable trained model, simultaneously~\cite{xue2012particle,katrutsa2015stress}. 
The exploiting of a multi-objective optimization approach in reinforcement learning problem is discussed in~\cite{liu2014multiobjective,van2014multi}. 

Denote by $\mathcal{L}_1(\theta), \ldots, \mathcal{L}_T(\theta)$ $T$ loss functions in multi-objective optimization problem.
These losses depend on the same vector $\theta \in \mathbb{R}^d$.
We consider the case of differentiable loss functions $\mathcal{L}_1,\ldots,\mathcal{L}_T$.
Since the considered losses can be conflicting, i.e. a minimizer of one loss is not a minimizer of another loss, the solution of the multi-objective minimization problem is defined in the following way.
\begin{definition}[Pareto-optimal solution]
\begin{enumerate}
\item A point $\theta'$ dominates $\theta$ for a multi-objective optimization problem, if $\mathcal{L}_i(\theta') \leq \mathcal{L}_i(\theta)$ for $i=1,\ldots,T$ and at least one inequality is strict.
\item A point $\theta^*$ is called Pareto-optimal solution, if there is no other point $\theta'$ that dominates it.
\end{enumerate}
\end{definition}
One of the standard approaches to find Pareto-optimal solution is  scalarization~\cite{ghane2015new}, e.g. weighting sum method that minimizes the conical combination of the losses:
\begin{equation}
\label{harmgrad:sumloss}
 \sum_{i=1}^T w_i \mathcal{L}_i(\theta) \to \min_{\theta}
\end{equation}
where $w_i > 0$ are static or dynamically updated weights of each loss.
The survey on the scalarization approach to solving multi-objective optimization problems is given in~\cite{miettinen2002scalarizing}.
Note that the weighting sum method~\eqref{harmgrad:sumloss} gives different Pareto-optimal points for different weights~$w_i$. 
The drawbacks of this approach to identify Pareto-optimal points are discussed in~\cite{das1997closer}.

\paragraph{Multiple gradient descent algorithm}
Another approach to finding the Pareto-optimal point is multiple-gradient descent algorithm (MGDA)~\cite{desideri2012multiple}. 
The main idea of this algorithm is to find in every iteration descent direction for the considered losses $\mathcal{L}_1,\ldots,\mathcal{L}_T$.
The procedure to find this direction is based on the necessary condition for the point to be a Pareto-optimal solution.
This condition is called Pareto stationarity.
\begin{definition}[Pareto stationarity]
\label{harmgrad:pardef}
Given $T$ differentiable losses $\mathcal{L}_i(\theta), \; i = 1, \ldots, T, \quad \theta \in \mathbb{R}^d$, the point $\hat{\theta}$ is called Pareto-stationary, iff there exists a convex combination of gradients $g_i = \nabla \mathcal{L}_i(\hat{\theta})$ equals to zero, i.e.
\begin{equation}
\label{harmgrad:par-stationary}
\sum_{i=1}^T \alpha_i g_i = 0, \quad \text{where } \alpha_i \geq 0, \; i=1,\ldots,T,  \quad \sum_{i=1}^T \alpha_i = 1.
\end{equation}
\end{definition}
Based on this necessary condition, the descent direction in MGDA is a convex combination of gradients $g_i = \nabla\mathcal{L}_i(\theta)$ whose norm is minimal.
Thus, the following optimization problem has to be solved
\begin{equation}
    \label{harmgrad:min_norm_problem}
        \alpha^* = \argmin_{\alpha \in \Delta_T} \left\| \sum_{i=1}^T \alpha_i g_i \right\|_2^2,
\end{equation}
where $\Delta_T = \left\{ x \in \mathbb{R}^T_+ \mid \sum\limits_{i=1}^T x_i = 1 \right\}$.
Denote by $d_h$ the resulting convex combination of $g_1,\ldots, g_T$, i.e. $d_h = \sum\limits_{i=1}^T\alpha^*_i g_i$.
If $\|d_h\|_2 = 0$, then $\theta$ is a Pareto-stationary point.
Otherwise, direction $-d_h$ is a descent direction for the losses $\mathcal{L}_1, \ldots, \mathcal{L}_T$~\cite{desideri2012multiple}.
Therefore, MGDA updates vector $\theta_k$, where $k$ is the iteration number, similar to the gradient descent for a single-objective optimization problem:
\begin{equation}
    \theta_{k+1} = \theta_k - s_k d^{(k)}_h,
    \label{harmgrad::eq::theta_update}
\end{equation}
where $s_k > 0$ is a learning rate.
Figure~\ref{harmgrad::fig::wh_wb} illustrates the position of the direction $d_h$ in the case of two gradients $g_1$ and $g_2$.
\begin{figure}[!ht]
    \centering
    \begin{subfigure}[b]{0.45\textwidth}
    \centering
    \includegraphics[width=\textwidth]{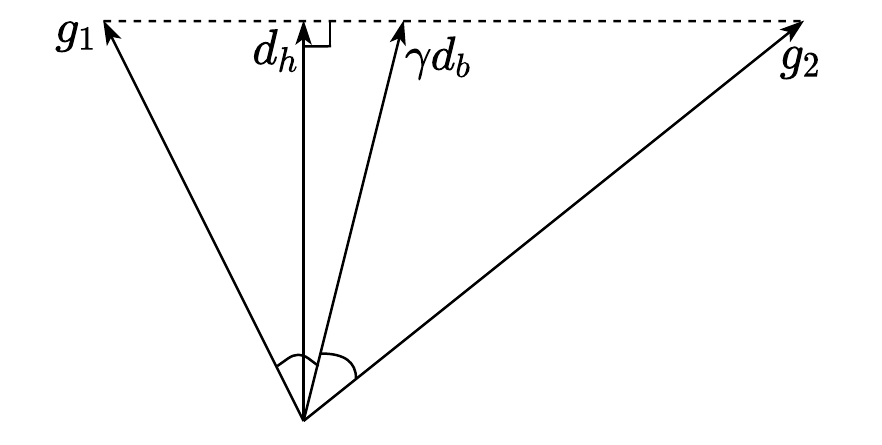}
    \caption{}
    \label{harmgrad::fig::balanced_wh_wb}
    \end{subfigure}
    ~
    \begin{subfigure}[b]{0.45\textwidth}
    \centering
    \includegraphics[width=\textwidth]{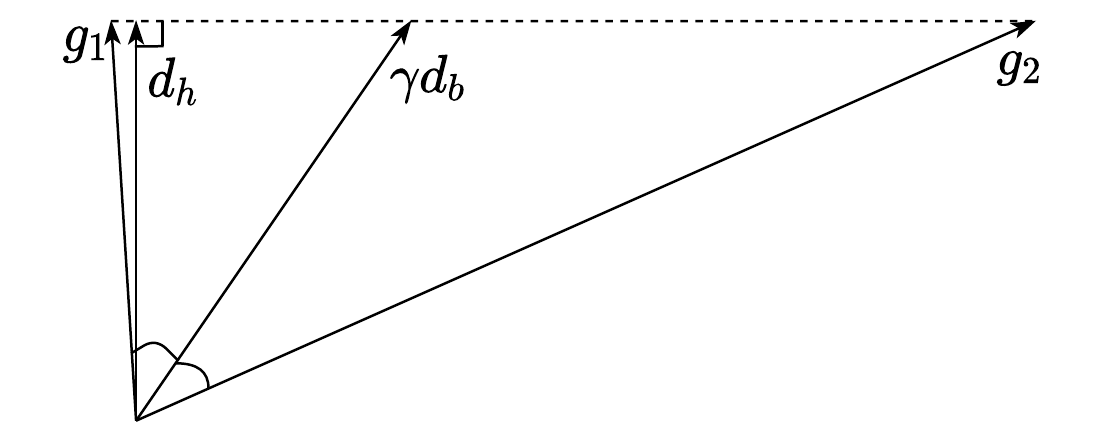}
    \caption{}
    \label{harmgrad::fig::unbalanced_wh_wb}
    \end{subfigure}
    \caption{Geometric interpretation of the directions $d_h$ and $d_b$ in the case of two losses.
    The case of $\|g_1\|_2 \sim \|g_2\|_2$ is shown on the left and the case of $\|g_2\|_2 \gg \|g_1\|_2$ is shown on the right. Normalization factor $\gamma > 0$ scales the vector $d_b$ such that normalized vector $\gamma d_b$ lies in the convex hull of gradients $g_1$ and $g_2$.}
    \label{harmgrad::fig::wh_wb}
\end{figure}
This choice of the descent direction leads to the convergence to the Pareto-stationary point, such that it is the closest to the initial point $\theta_0$ among all other Pareto-stationary points.
Thus, in contrast to the weighting sum method~\eqref{harmgrad:sumloss}, MGDA converges to a particular Pareto-stationary point.  

One more property of the direction $d_h$ is the following (see Theorem 2.2. in~\cite{desideri2012multiple}): if this direction belongs to the interior of the convex hull of the gradients $g_1,\ldots, g_T$, it satisfies
\begin{equation}
\label{harmgrad:equal}
    (d_h, g_i) = \|d_h\|_2^2, \quad i = 1, \ldots, T.
\end{equation}
It means that each loss $\mathcal{L}_i$ gets approximately the same absolute decrease after updating $\theta_k$ according to~\eqref{harmgrad::eq::theta_update} since
\[
\mathcal{L}_i(\theta_k - s_k d^{(k)}_h) \approx \mathcal{L}_i(\theta_k) - s_k(d^{(k)}_h, g_i), \quad i = 1,\ldots, T,
\]
where $d^{(k)}_h$ is the direction $d_h$ in the $k$-th iteration.

This is the starting point for our method: if the individual losses are not balanced (e.g. lets multiply one loss by a large factor), the Pareto front does not change, however the solution of~\eqref{harmgrad:min_norm_problem} changes significantly, compare $d_h$ in Figures~\ref{harmgrad::fig::balanced_wh_wb} and~\ref{harmgrad::fig::unbalanced_wh_wb}.
Instead, we propose to look for the direction $d_b$ that gives the same \emph{relative decrease}:
\begin{equation}
\label{harmgrad:bisector}
    (d_b, g_i) = \|d_b\|_2^2 \Vert g_i \Vert_2,
\end{equation}
or in other words, the direction $d_b$ has the same angle to the gradients $g_1,\ldots, g_T$ (see Theorem~\ref{harmgrad::th::edm_claim}).
Therefore, the proposed method is called \emph{Equiangular Direction Method (EDM)}.
For two gradients $g_1, g_2$, the direction $d_b$ is a bisector of the angle between $g_1, g_2$, see Figure~\ref{harmgrad::fig::wh_wb}.




\section{Equiangular Direction Method (EDM)}
The key step of the EDM is to find the direction $d_b$ that has the same angle with individual gradients $g_i= \nabla \mathcal{L}_i(\theta), i = 1, \ldots, T$.
This direction can be naturally written as a convex combination of the normalized gradients:
\begin{equation}
    d_b = \sum_{i=1}^T \beta_i^* \frac{g_i}{\|g_i\|_2},
    \label{harmgrad:eq:d_b}
\end{equation}
where the coefficients $\beta_i^*$ are the solution of the following convex optimization problem:
\begin{equation}
\label{harmgrad:normgrad}
\beta^* = \argmin_{\beta \in \Delta_T} \left\|  \sum_{i=1}^T \beta_i \frac{g_i}{\|g_i\|_2}\right\|_2^2.
\end{equation}
The following Theorem shows the main characteristic property of this direction.

\begin{theorem}
The direction $d_b$ defined in~\eqref{harmgrad:eq:d_b} satisfies the following equality
\[
(d_b, g_i) = \| d_b \|_2^2 \Vert g_i \Vert_2,
\]
for all $i$ such that $\beta^*_i \ne 0$. 
If $\beta^*_i \ne 0$ for all $i=1,\ldots,T$, the direction $d_b$ has the same angle with all individual gradients, i.e. this direction and gradients $g_i$ are equiangular with some angle $\alpha_i$.
\label{harmgrad::th::edm_claim}
\end{theorem}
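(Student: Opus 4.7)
The plan is to apply the KKT conditions to the constrained quadratic program~\eqref{harmgrad:normgrad}, in complete analogy with how~\eqref{harmgrad:equal} is obtained for MGDA. Let $\hat{g}_i = g_i/\|g_i\|_2$, so that $d_b = \sum_i \beta_i^* \hat{g}_i$ and $\beta^*$ minimizes the convex quadratic $f(\beta) = \|\sum_i \beta_i \hat{g}_i\|_2^2$ over the simplex $\Delta_T$. Since the simplex is a convex set with affine equality constraint $\sum_i \beta_i = 1$ and inequality constraints $\beta_i \geq 0$, and the objective is convex and differentiable, the KKT conditions are necessary and sufficient.

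First I would write the Lagrangian $L(\beta, \lambda, \mu) = f(\beta) + \lambda(\sum_i \beta_i - 1) - \sum_i \mu_i \beta_i$ and compute $\partial L/\partial \beta_j = 2(d_b, \hat{g}_j) + \lambda - \mu_j$. Stationarity together with complementary slackness $\mu_j \beta_j^* = 0$ then tells me that for every index $j$ with $\beta_j^* \neq 0$ one has $\mu_j = 0$, so $(d_b, \hat{g}_j) = -\lambda/2$. In particular, the scalar $c := (d_b, \hat{g}_j)$ takes the \emph{same} value for all active indices $j$. Rewriting $(d_b, \hat{g}_j) = (d_b, g_j)/\|g_j\|_2$ then yields $(d_b, g_j) = c\,\|g_j\|_2$ for all such $j$.

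Next I would identify the constant $c$ with $\|d_b\|_2^2$. The trick is a direct expansion: since $d_b = \sum_i \beta_i^* \hat{g}_i$,
\begin{equation*}
\|d_b\|_2^2 = (d_b, d_b) = \sum_{i=1}^T \beta_i^* (d_b, \hat{g}_i) = \sum_{i:\,\beta_i^* \neq 0} \beta_i^* \cdot c = c \sum_{i=1}^T \beta_i^* = c,
\end{equation*}
where the last equality uses $\sum_i \beta_i^* = 1$. Combining this with the previous paragraph gives $(d_b, g_j) = \|d_b\|_2^2 \,\|g_j\|_2$ for every active $j$, which is exactly the claim. For the second statement, if all $\beta_i^* \neq 0$, dividing by $\|d_b\|_2 \|g_j\|_2$ shows that $\cos \alpha_j = (d_b,g_j)/(\|d_b\|_2\|g_j\|_2) = \|d_b\|_2$ is independent of $j$, so $d_b$ makes the same angle with every $g_i$.

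I do not expect any real obstacle here: the argument is just the KKT calculus for a min-norm problem on the simplex, and the only mildly delicate point is remembering that the common value of $(d_b, \hat{g}_j)$ is guaranteed only on the support of $\beta^*$, which is precisely why the statement of the theorem is restricted to indices with $\beta_i^* \neq 0$.
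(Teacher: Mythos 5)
Your proposal is correct and follows essentially the same route as the paper: KKT stationarity for the min-norm problem on the simplex gives $(d_b, g_j)/\|g_j\|_2 = -\lambda/2$ on the support of $\beta^*$, and the expansion $\|d_b\|_2^2 = \sum_i \beta_i^*(d_b, g_i)/\|g_i\|_2$ identifies that constant with $\|d_b\|_2^2$. The only cosmetic difference is that you handle the inactive indices via explicit complementary slackness multipliers, whereas the paper restricts the problem to the support set and treats the positivity constraints implicitly.
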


\begin{proof}
Let $\mathcal{I} = \{ i_1, \ldots, i_K \}$ be the index set such that $i_p \in \mathcal{I}$ is equivalent to $\beta_{i_p}^* > 0$.
After that, the problem~\eqref{harmgrad:normgrad} can be re-written as
\[
\hat{\beta}^* = 
\argmin_{\hat{\beta} \in N_{K}} \left\| \sum\limits_{k=1}^{K} \hat{\beta}_k \frac{g_{i_k}}{\|g_{i_k}\|_2} \right\|_2^2, \quad N_{K} = 
\left\{\hat{\beta} \in \mathbb{R}^{K}_{++} \mid  \sum\limits_{k = 1}^{K} \hat{\beta}_{k} = 1\right\}.
\]
The solution $\hat{\beta}^*$ uniquely defines $\beta^*$ in the following way:
\begin{equation}
    \beta^*_i =
\begin{cases}
0, & \text{if } i \not\in \mathcal{I}\\
\hat{\beta}^*_k, & \text{if } i = i_k \in \mathcal{I}. 
\end{cases}
\label{harmgrad::eq::nnz_beta}
\end{equation}

Now we can write Lagrangian and derive KKT optimality conditions for this optimization problem.
Note that the constraint $\hat{\beta} \in \mathbb{R}^K_{++}$ is used in KKT conditions implicitly since by construction there exists solution that satisfies this constraint.
Therefore, the Lagrangian is the following function $L(\hat{\beta}, \lambda) = \left\| \sum\limits_{k=1}^{K} \hat{\beta}_k \frac{g_{i_k}}{\|g_{i_k}\|_2} \right\|_2^2 + \lambda \left(\sum\limits_{k = 1}^{K} \hat{\beta}_k - 1\right)$.
From the KKT optimality conditions follows that the gradient of $L$ with respect to $\hat{\beta}_j$ has to be zero in $\hat{\beta}^*$ and $\lambda^*$ for $j \in \mathcal{I}$: 
\[
\frac{\partial L(\hat{\beta}^*, \lambda^*)}{\partial \hat{\beta}_j} = 
2 \sum_{k = 1}^{K} \hat{\beta}^*_k\frac{(g_{i_k}, g_{i_j})}{\|g_{i_k}\|_2\|g_{i_j}\|_2} + \lambda^* = 
2 \left( \frac{g_{i_j}}{\|g_{i_j}\|_2}, \sum_{i=1}^T\beta^*_i \frac{g_{i}}{\|g_{i}\|_2}\right) + \lambda^* = 
2 \left(\frac{g_{i_j}}{\|g_{i_j}\|_2}, d_b  \right) + \lambda^* = 0,
\]
since only indices from the set $\mathcal{I}$ correspond to nonzero elements of $\beta^*$~\eqref{harmgrad::eq::nnz_beta}.
Thus we get the equality for any index $i=1,\ldots,T$ such that $\beta^*_i \neq 0$:
\begin{equation}
    (d_b, g_i) = -\frac{\lambda^*}{2} \| g_i\|_2.
    \label{harmgrad::eq::dbgi_lambda}
\end{equation}
Now we can compute the value of the remaining factor $-\frac{\lambda^*}{2}$.
Consider the following chain of equalities:
\[
\|d_b\|_2^2 = (d_b, d_b) = \left(d_b, \sum_{i=1}^T \beta_i^* \frac{g_i}{\|g_i\|_2}\right) = \sum_{i=1}^T \beta_i^* \frac{(d_b, g_i)}{\|g_i\|_2} = -\sum_{i=1}^T \beta_i^* \frac{\lambda^*}{2} = -\frac{\lambda^*}{2}.
\]
Thus, the equality~\eqref{harmgrad::eq::dbgi_lambda} can be re-written in the final form:
\[
(d_b, g_i) = \|d_b\|_2^2 \|g_i\|_2,
\]
where $i$ is any index such that $\beta_i^* > 0$.
\end{proof}

\subsection{Normalization of the equiangular direction $d_b$}
To define the direction $d_b$ only angles between gradients $g_i, i=1,\ldots, T$ are important.
But to define the proper norm of this direction, we need additional assumptions.
If we have only two gradients $g_1$ and $g_2$ and their norms are equal, then it is natural to require that the normalized vector $\gamma d_b$ coincides with the vector $d_h$, i.e., the vector $\gamma d_b$ has to belong to the convex hull of gradients $g_1$ and $g_2$.
To satisfy this requirement, the scale factor $\gamma$ and the corresponding vector $\gamma d_b$ have the following forms:
\begin{equation}
\gamma = \left(\sum\limits_{i=1}^T \frac{\beta^*_i}{\|g_i\|_2}\right)^{-1}  \quad \text{and} \quad 
    \gamma d_b = \left(\sum\limits_{i=1}^T \frac{\beta^*_i}{\|g_i\|_2}\right)^{-1} \sum_{i=1}^T \beta_i^* \frac{g_i}{\|g_i\|_2}.
    \label{harmgrad::eq::norm_db}
\end{equation}

\begin{remark}
For $T=2$ there is an explicit formula for the bisector direction:
\[
    d_b =  \frac{g_1}{\Vert g_1 \Vert} + \frac{g_2}{\Vert g_2 \Vert},
\]
that is formally not equal to the solution of the problem~\eqref{harmgrad:normgrad}, but only up to a normalization factor.
Now to get the normalized vector $\gamma d_b$ that belongs to the convex hull of the gradients $g_1, g_2$ we use the scale factor $\gamma = \left(\frac{1}{\|g_1\|_2} + \frac{1}{\|g_2\|_2}\right)^{-1}$ and obtain the final form of the  vector $\gamma d_b$:
\begin{equation}
\label{harmgrad:bisector2}
\gamma d_b = \frac{\left(\frac{g_1}{\Vert g_1 \Vert} + \frac{g_2}{\Vert g_2 \Vert} \right)}{\left(\frac{1}{\Vert g_1 \Vert} + \frac{1}{\Vert g_2 \Vert} \right)}.
\end{equation}
Note that if $\Vert d_b \Vert_2 > 0$, the direction defined by \eqref{harmgrad:bisector2} provides a guaranteed descent direction for both of the losses.
\end{remark}

\begin{theorem}
The equiangular direction method converges to the Pareto-stationary point $\hat{\theta}$ in finite number of iterations.
If the sequence $\{\theta_k\}$ is infinite, then there exists subsequence that converges to the Pareto-stationary point. 
\end{theorem}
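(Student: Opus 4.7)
The plan is to adapt the classical MGDA convergence argument to the normalized-gradient variant used in EDM, the two core ingredients being that $-d_b^{(k)}$ is a common descent direction whenever $\|d_b^{(k)}\|_2>0$ and that $\|d_b^{(k)}\|_2=0$ already certifies Pareto-stationarity. First I would extend Theorem~\ref{harmgrad::th::edm_claim} to all indices: for $i$ with $\beta_i^*>0$ the equality $(d_b,g_i)=\|d_b\|_2^2\|g_i\|_2$ is in hand, and for $i$ with $\beta_i^*=0$ the KKT complementary slackness for~\eqref{harmgrad:normgrad} gives $\partial L/\partial\beta_i\geq 0$ at the optimum, hence $(d_b,g_i)\geq\|d_b\|_2^2\|g_i\|_2$. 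Combining both cases, $(d_b^{(k)},g_i^{(k)})>0$ for every $i$ whenever $\|d_b^{(k)}\|_2>0$, so $-d_b^{(k)}$ is a strict descent direction for every individual loss.

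The finite-termination alternative is then immediate. If $\|d_b^{(k)}\|_2=0$ at some iteration $k$ then $\sum_i\beta_i^*g_i/\|g_i\|_2=0$; rescaling to $\alpha_i=(\beta_i^*/\|g_i\|_2)/\sum_j(\beta_j^*/\|g_j\|_2)$ yields a convex combination $\sum_i\alpha_i g_i=0$, so $\hat{\theta}=\theta_k$ is Pareto-stationary by Definition~\ref{harmgrad:pardef}.

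Otherwise $\|d_b^{(k)}\|_2>0$ for every $k$. With an Armijo or exact line search for $s_k$ the descent property forces each sequence $\{\mathcal{L}_i(\theta_k)\}$ to be strictly monotone decreasing, and assuming the losses are bounded below together with compactness of the joint sublevel set $\{\theta:\mathcal{L}_i(\theta)\leq\mathcal{L}_i(\theta_0),\;i=1,\ldots,T\}$, each $\mathcal{L}_i(\theta_k)$ converges. A routine Zoutendijk-type bound combined with $(d_b^{(k)},g_i^{(k)})\geq\|d_b^{(k)}\|_2^2\|g_i^{(k)}\|_2$ forces $\|d_b^{(k)}\|_2\|g_i^{(k)}\|_2\to 0$. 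Compactness of the sublevel set extracts a subsequence $\theta_{k_j}\to\hat{\theta}$, and continuity of the gradients combined with continuity of the solution map of the strictly convex QP~\eqref{harmgrad:normgrad} in its coefficients lets one pass to the limit and conclude $d_b(\hat{\theta})=0$; the finite-termination reasoning applied at $\hat{\theta}$ then closes the argument.

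The hard part is precisely this limit step, because continuity of $\theta\mapsto d_b(\theta)$ can fail at points where some $g_i(\theta)$ vanishes, since the normalization $g_i/\|g_i\|_2$ breaks down there. Such a point already satisfies $g_i(\theta)=0$, i.e.\ it is individually stationary for $\mathcal{L}_i$ and hence trivially Pareto-stationary for that component, so one would either exclude those losses from the asymptotic analysis along the subsequence or treat them as the degenerate case. The secondary technicality is justifying the Zoutendijk-style step-size bound, which follows routinely from Lipschitz continuity of each $\nabla\mathcal{L}_i$ and the Armijo rule, mirroring the original MGDA analysis.
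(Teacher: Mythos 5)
Your proposal is correct and follows essentially the same route as the paper: the finite-termination case is handled by exactly the same rescaling $\alpha_i^* = \gamma\,\beta_i^*/\|g_i\|_2$ that turns $\gamma d_b = 0$ into the Pareto-stationarity certificate of Definition~\ref{harmgrad:pardef}, and the infinite case is an adaptation of the MGDA convergence argument (Theorem~2.3 of~\cite{desideri2012multiple}) that the paper invokes purely by citation. Your expansion of that cited step --- the descent inequality $(d_b,g_i)\geq\|d_b\|_2^2\|g_i\|_2$ extended to all indices via complementary slackness, the Zoutendijk-type bound under a line search with bounded-below losses and compact sublevel sets, and the flagged discontinuity of $g_i/\|g_i\|_2$ at points where a gradient vanishes --- supplies details and explicit assumptions that the paper's one-line reference leaves implicit.
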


\begin{proof}
If the EDM converges after $k^*$ iterations to the point $\theta_{k^*}$ such that $\|\gamma d_b\|_2 = 0$, then there exists $\beta^*$ such that $\gamma \sum\limits_{i=1}^T \beta^*_i \frac{g_i}{\|g_i\|_2} = 0$.
Therefore, $\theta_{k^*}$ is a Pareto-stationary point since $\sum\limits_{i=1}^T \alpha_i^* g_i = 0$, where $\alpha_i^* = \gamma \frac{\beta_i^*}{\|g_i\|_2}$ and $\alpha^* \in \Delta_T$.
On the other hand, if the sequence $\{\theta_k\}$ is infinite, then the proof is the same as the proof of Theorem 2.3. from~\cite{desideri2012multiple} on the convergence of MGDA. 
\end{proof}

The proposed method is summarized in Algorithm~\ref{harmgrad::alg::proposed}.

\begin{algorithm}[!htb]
\centering
\caption{Equiangular direction method}
\label{harmgrad:alg}
\begin{algorithmic}[1]
\REQUIRE{Losses $\mathcal{L}_1, \ldots, \mathcal{L}_T$, initial point $\theta_0$, number of iterations $I$, tolerance $\varepsilon$, learning rate $s$}
\ENSURE{Pareto-stationary point $\hat{\theta}$}
\FOR{$k=0,\ldots,I-1$}
\STATE{Compute individual gradients $g_i = \nabla \mathcal{L}_i(\theta_k)$, for all $i = 1,\ldots, T$}
\STATE{Solve optimization problem~\eqref{harmgrad:normgrad} with Algorithm~\ref{harmgrad::alg::fw} and obtain $\beta^*$}
\STATE{Compute direction $d^{(k)}_b$~\eqref{harmgrad:eq:d_b}}
\STATE{Normalize obtained direction $d^{(k)}_b$ according to equation~\eqref{harmgrad::eq::norm_db}, $\bar{d}_b^{(k)} = \gamma d^{(k)}_b$}
\IF{$\|\bar{d}^{(k)}_b\|_2 \leq \varepsilon$}
\STATE{$\hat{\theta} = \theta_k$}
\RETURN $\hat{\theta}$
\ENDIF
\STATE{Update: $\theta_{k+1} = \theta_k - s \cdot \bar{d}^{(k)}_b$}
\ENDFOR
\STATE{$\hat{\theta} = \theta_I$}
\RETURN $\hat{\theta}$
\end{algorithmic}
\label{harmgrad::alg::proposed}
\end{algorithm}

\subsection{Frank-Wolfe method to find $\beta^*$}
Since the feasible set in problem~\eqref{harmgrad:normgrad} is a simplex, this convex optimization problem can be efficiently solved by Frank-Wolfe method~\cite{frank1956algorithm,jaggi2013revisiting}.
Note that the the objective function in problem~\eqref{harmgrad:normgrad} can be written as
\begin{equation}
    \left\| \sum_{i=1}^T \beta_i u_i \right\|_2^2 =  \sum_{i=1}^T \beta_i^2 u^{\top}_iu_i + 2\sum_{i=1}^T\sum_{j=1}^{i-1} \beta_i\beta_j u_i^{\top}u_j = \beta^{\top}M\beta,
    \label{harmgrad::eq::fw_obj}
\end{equation}
where $u_i = \frac{g_i}{\|g_i\|_2}$ and $M$ is a matrix such that $M_{ij} = u_i^{\top}u_j$.
Then, one iteration of the Frank-Wolfe method for solving problem~~\eqref{harmgrad:normgrad} reduces to the following steps.
The first step is computing the gradient $2M\beta$ of the objective function~\eqref{harmgrad::eq::fw_obj} and find the index $i^*$ of its smallest element.
The second step is computing the optimal step size by solving auxiliary one-dimensional optimization problem:
\begin{equation}
    \eta^* = \argmin_{\eta \in [0, 1]} ((1 - \eta)\beta + \eta e_{i^*})^{\top}M((1 - \eta)\beta + \eta e_{i^*}),
    \label{harmgrad::gamma}
\end{equation}
where $e_{i^*}$ is the $i^*$-th basis vector.
The solution of problem~\eqref{harmgrad::gamma} can be written in the closed form:
\begin{equation}
    \eta^* =
\begin{cases}
0, & \text{if } \beta^{\top}M\beta \leq \beta^{\top}Me_{i^*},\\
1, & \text{if } e_{i^*}^{\top}Me_{i^*} \leq \beta^{\top}Me_{i^*}, \\
\frac{\beta^{\top}M(\beta - e_{i^*})}{(e_{i^*} - \beta)^{\top}M(e_{i^*} - \beta)}, & \text{otherwise.}
\end{cases}
\label{harmgrad::eq::gamma_solution}
\end{equation}
The third step is updating coefficients of convex combination as
\[
\beta^{(k+1)} = (1 - \eta^*)\beta^{(k)} + \eta^* e_{i^*}.
\]

For convenience we provide the detailed description of the Frank-Wolfe method to solve the problem~\eqref{harmgrad:normgrad} in Algorithm~\ref{harmgrad::alg::fw}. 
Note that every iteration of the Frank-Wolfe method that solves problem~\eqref{harmgrad:normgrad} can be interpreted from the geometric perspective.
In particular, the angle between the basis vector $e_{i^*}$ and gradient $2M\beta^{(k)}$ is maximum among angles between the gradient and the basis vectors, see line 4 in Algorithm~\ref{harmgrad::alg::fw}.
 
\begin{algorithm}[!h]
\caption{Frank-Wolfe method to solve~\eqref{harmgrad:normgrad}}
\begin{algorithmic}[1]
\REQUIRE{Normalized gradients $u_i = \frac{g_i}{\|g_i\|_2}, i = 1,\ldots, T$, tolerance $\varepsilon$, maximum number of iterations $K$}
\ENSURE{Coefficients $\beta^*$ of the convex combination of $u_i$, whose norm is minimum}
\STATE{$\beta^{(0)} = \frac{1}{T}\mathbf{1}_T$, where $\mathbf{1}_T$ is $T$-dimensional vector with all ones}
\STATE{Precompute a matrix $M \in \mathbb{R}^{T \times T}$ such that $M_{ij} = u_i^{\top}u_j$} 
\FOR{$k=0, \ldots, K-1$}
\STATE{$i^* = \argmin\limits_{i \in \{1,\ldots,T\}} \sum\limits_{j=1}^T M_{ij}\beta^{(k)}_j$}
\STATE{$\eta^* = \argmin\limits_{\eta \in [ 0, 1]} \; ((1 - \eta) \beta^{(k)} + \eta e_{i^*})^{\top} M ((1 - \eta) \beta^{(k)} + \eta e_{i^*})$, where $e_{i^*}$ is the $i^*$-th basis vector. The solution is computed with formula~\eqref{harmgrad::eq::gamma_solution}}
\IF{$\eta^* \leq \varepsilon$}
\STATE{$\beta^* = \beta^{(k)}$}
\RETURN $\beta^*$
\ENDIF
\STATE{$\beta^{(k+1)} = (1 - \eta^*)\beta^{(k)} + \eta^* e_{i^*}$} 
\ENDFOR
\STATE{$\beta^* = \beta^{(K)}$}
\RETURN{$\beta^*$}
\end{algorithmic}
\label{harmgrad::alg::fw}
\end{algorithm}

\section{Computational experiment}
In this section, we compare the proposed method with other approaches to solve imbalanced classification problem and multi-task learning problem.
As an example of the latter problem, we consider classification problem on the MultiMNIST dataset, which is a modification of the standard MNIST dataset~\cite{lecun1998gradient} appropriate for multi-task learning.
To compute the gradients of losses in the considered problems, we use automatic differentiation technique implemented in PyTorch framework~\cite{paszke2017automatic}.
The source code can be found at GitHub\footnote{\url{https://github.com/amkatrutsa/edm}}.

\subsection{Imbalanced classification problem}

Let $(x_j, y_j), j=1, \ldots, N$ be the dataset with $c$ classes, i.e. $y_j \in Y = \{0, \ldots, c-1 \}, j = 1, \ldots, N$.
Denote by $X$ a set of samples $x_j \in \mathbb{R}^m$.
Consider a function $f: X \times \mathbb{R}^d \to Y$ that estimates label $y \in Y$ of a sample $x \in X$.
We need to find a parameter $\theta^* \in \mathbb{R}^d$ such that the classification quality will be as high as possible.
To measure classification quality, the loss function $\mathcal{L}$ is introduced and is minimized with respect to $\theta$.
This loss function is typically written as
\begin{equation}
\label{harmgrad:multi-class}
    \mathcal{L}(\theta) = \sum_{i=0}^{c-1} \mathcal{L}_i(\theta),
\end{equation}
where $\mathcal{L}_i$ is the loss corresponding to the $i$-th class:
\begin{equation*}
    \mathcal{L}_i(\theta) = \sum_{j \in C_i} l(\theta \mid (x_j, y_j)), 
\end{equation*}
where $C_i, \; i = 0, \ldots, c- 1$ is a set of indices such that if $k \in C_i$ then $y_k = i$ and $l$ is a loss for a given pair $(x_j, y_j)$.
We use cross-entropy loss function 
and represent $f$ as a neural network.
Therefore, the vector~$\theta$ is composed of the stacked vectorized parameters of this neural network.

The classification problem is called imbalanced if there exists class label $y^* \in Y$ such that $|C_{y^*}| \ll |C_y|$, for all $y \in Y \setminus y^*$.
In other words, the number of samples from the class $y^*$ is significantly smaller than the number of samples from the other classes.
In the case of binary imbalanced classification, where $c=2$, the class $y^*$ is called \emph{minor} and the other class is called \emph{major}.
Further, we always refer the label of the major class as $0$ and the label of the minor class as $1$.
If one always assigns to any sample the label $0$, the standard accuracy computed over all samples will be close to~1.
However, it means that the samples with ground-truth label $1$ are always misclassified.
To address this issue, a class weight is introduced to balance $\mathcal{L}_0$ and $\mathcal{L}_1$.
Denote by $\mu \geq 1$ the weight corresponding to the minor class.
Then the total loss function can be re-written in the form
\begin{equation}
    \mathcal{L}(\theta) = \mathcal{L}_0(\theta) + \mu\mathcal{L}_1(\theta).
    \label{harmgrad:eq:imbalanced_class_weight_loss}
\end{equation}
The higher the value of $\mu$, the higher classification accuracy in the minor class is expected.  
However, the hyperparameter $\mu$ has to be tuned to balance accuracies of the major and minor classes.
To avoid this tuning, EDM can be used to automatically balance $\mathcal{L}_0$ and $\mathcal{L}_1$ without introducing additional hyperparameter.

To compare considered methods in the imbalanced classification problem, we use the credit card transaction dataset~\cite{dal2017credit}, where the minor class consists of fraud transactions.
The number of samples in this dataset is $284807$, and the number of features is $30$.
Note that number of fraud transactions is only $492$, which is $0.17\%$ of the total number of samples.
Thus, we have an imbalanced binary classification problem.

To demonstrate how EDM adjusts accuracies in the imbalanced classification problem, we compare EDM with the vanilla SGD method that minimizes the total loss $\mathcal{L}$~\eqref{harmgrad:eq:imbalanced_class_weight_loss} for $\mu = 1$ and $\mu= 10$.
More advanced gradient-based methods still suffer from the necessity of tuning hyperparameter $\mu$.
Therefore, we compare EDM with the vanilla SGD method only.
We consider the simple neural network with two fully-connected layers, ReLU activation between them and hidden dimension equal to $100$.
The entire dataset is split in train and test sets such that the portion of the minor class is the same.
The numbers of samples in the train and test sets are 227845 and 56962, respectively.
Since the classes are imbalanced, we generate batches for them separately.
The batch sizes are different, but during every epoch 40 batches of every class are used in the training process.
We test different learning rates $s \in \{10^{-1}, 10^{-2}, 10^{-3} \}$ in considered methods.
Since the smaller learning rate induces the larger number of epochs, we use different numbers of epochs in training. 
In particular, learning rates $s = 10^{-3}$ and $s = 10^{-2}$ in all considered methods require $30$ epochs for the convergence.
In the case of learning rate $10^{-3}$, SGD with $\mu=1$ and $\mu=10$ converges after $30$ epochs, but EDM and MGDA require $150$ and $300$ epochs for convergence, respectively.

Table~\ref{harmgrad::tab::imbalanced_class} presents the test accuracies separately for the minor and major classes.
It shows that EDM gives a reasonable trade-off between accuracies on such imbalanced classes without any tuning of hyperparameter.
Moreover, EDM is robust to a range of step sizes and preserves balanced accuracies for individual classes. 
Also, EDM gives higher accuracy for the major class and slightly smaller or equal accuracy for the minor class compare with MGDA, see Tables~\ref{harmgrad::tab::imbalanced_class_s10-3} and~\ref{harmgrad::tab::imbalanced_class_s10-2}.
In the case of using learning rate $s = 10^{-1}$, EDM provides higher accuracies for both major and minor classes than MGDA.

\begin{table}[!h]
    \centering
    \caption{Test accuracies for both classes given by the considered methods. 
    The reported mean values and standard deviations are computed from three random initializations of the considered model.}
    \begin{subtable}{\textwidth}
    \centering
    \begin{tabular}{ccccc}
    Class & EDM & MGDA & SGD, $\mu=1$ & SGD, $\mu = 10$ \\
    \toprule
    Minor &  $0.918 $ & $0.925 \pm 5\cdot 10^{-3}$ & $0.895 \pm 5 \cdot 10^{-3}$ & $0.966 \pm 5 \cdot 10^{-3}$  \\
    Major & $0.953 \pm 10^{-3}$ & $0.925 \pm 7\cdot 10^{-3}$ & $0.9843 \pm 5\cdot 10^{-4}$ & $0.837 \pm 4 \cdot 10^{-3}$\\
\end{tabular}
\caption{Learning rate $s = 10^{-3}$}
\label{harmgrad::tab::imbalanced_class_s10-3}
\end{subtable}
\\
    \begin{subtable}{\textwidth}
    \centering
    \begin{tabular}{ccccc}
    Class & EDM & MGDA & SGD, $\mu=1$ & SGD, $\mu = 10$ \\
    \toprule
    Minor & $0.918$ & $0.918$ & $0.901 \pm 5 \cdot 10^{-3}$ & $0.966 \pm 5 \cdot 10^{-3}$ \\
    Major & $0.954 \pm 2 \cdot 10^{-3}$ & $0.949 \pm 2\cdot 10^{-3}$ & $0.984 \pm 2\cdot 10^{-3}$ & $0.917 \pm 10^{-3}$\\
\end{tabular}
\caption{Learning rate $s = 10^{-2}$}
\label{harmgrad::tab::imbalanced_class_s10-2}
\end{subtable}
\\
\begin{subtable}{\textwidth}
\centering
    \begin{tabular}{ccccc}
    Class & EDM & MGDA & SGD, $\mu=1$ & SGD, $\mu = 10$ \\
    \toprule
    Minor & $0.904 \pm 9 \cdot 10^{-3}$ & $0.901 \pm 5\cdot 10^{-3}$ & $0.881 \pm 5 \cdot 10^{-3}$ & $0.894 \pm 4 \cdot 10^{-3}$  \\
    Major & $0.982 \pm 10^{-3}$ & $0.983 \pm 2\cdot 10^{-3}$  & $0.9924 \pm 8 \cdot 10^{-4}$  & $0.98584 \pm  4 \cdot 10^{-4}$\\
\end{tabular}
\caption{Learning rate $s=10^{-1}$}
\label{harmgrad::tab::imbalanced_class_s10-1}
\end{subtable}
    \label{harmgrad::tab::imbalanced_class}
\end{table}

\subsection{Multi-task learning problem}
In this section, the standard classification problem is reduced to the multi-task learning (MTL) problem following the study~\cite{sener2018multi}.
To test the presented method in solving the MTL problem, we consider the MultiMNIST dataset and adaptation of the LeNet neural network~\cite{lecun1989backpropagation}.
MultiMMIST dataset is a modification of the classical MNIST dataset~\cite{lecun1998gradient}.
Every image from the MultiMNIST is composed of two MNIST images: one image is placed in the top-left corner, and the other one is placed in the bottom-right corner~\cite{sabour2017dynamic,sener2018multi}. 
To make overlaying consistent, we create an image of size $32 \times 32$, place digits from the original MNIST dataset to the opposite corners and finally scale it to the standard size $28 \times 28$.
Samples from MultiMNIST are shown in Figure~\ref{harmgrad::fig::multimnist_example}.
\begin{figure}[!h]
    \centering
    \includegraphics[width=\textwidth]{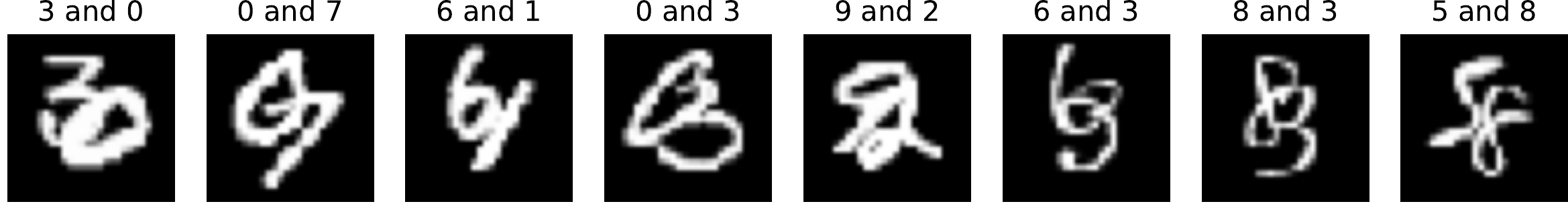}
    ~
    \caption{Samples from MultiMNIST dataset. 
    Every sample is composed by taking two images with different labels from MNIST and placing one image to the top-left and another image to the bottom-right}
    \label{harmgrad::fig::multimnist_example}
\end{figure}

Now we have the MTL problem, where the first task is to classify an image in the top-left corner, and the second task is to classify an image in the bottom-right corner.
To solve this MTL problem, the following modification of the LeNet architecture~\cite{lecun1989backpropagation} is presented in~\cite{sener2018multi} and is used in this study, see Figure~\ref{harmgrad::fig::lenet}.
Shared layers generate a representation of every image that is used to solve both tasks.
Task-specific layers are responsible for solving a particular task, use representation, constructed by the shared layers, and do not affect the solution of the other task.
\begin{figure}[!h]
    \centering
    \includegraphics[width=0.7\textwidth]{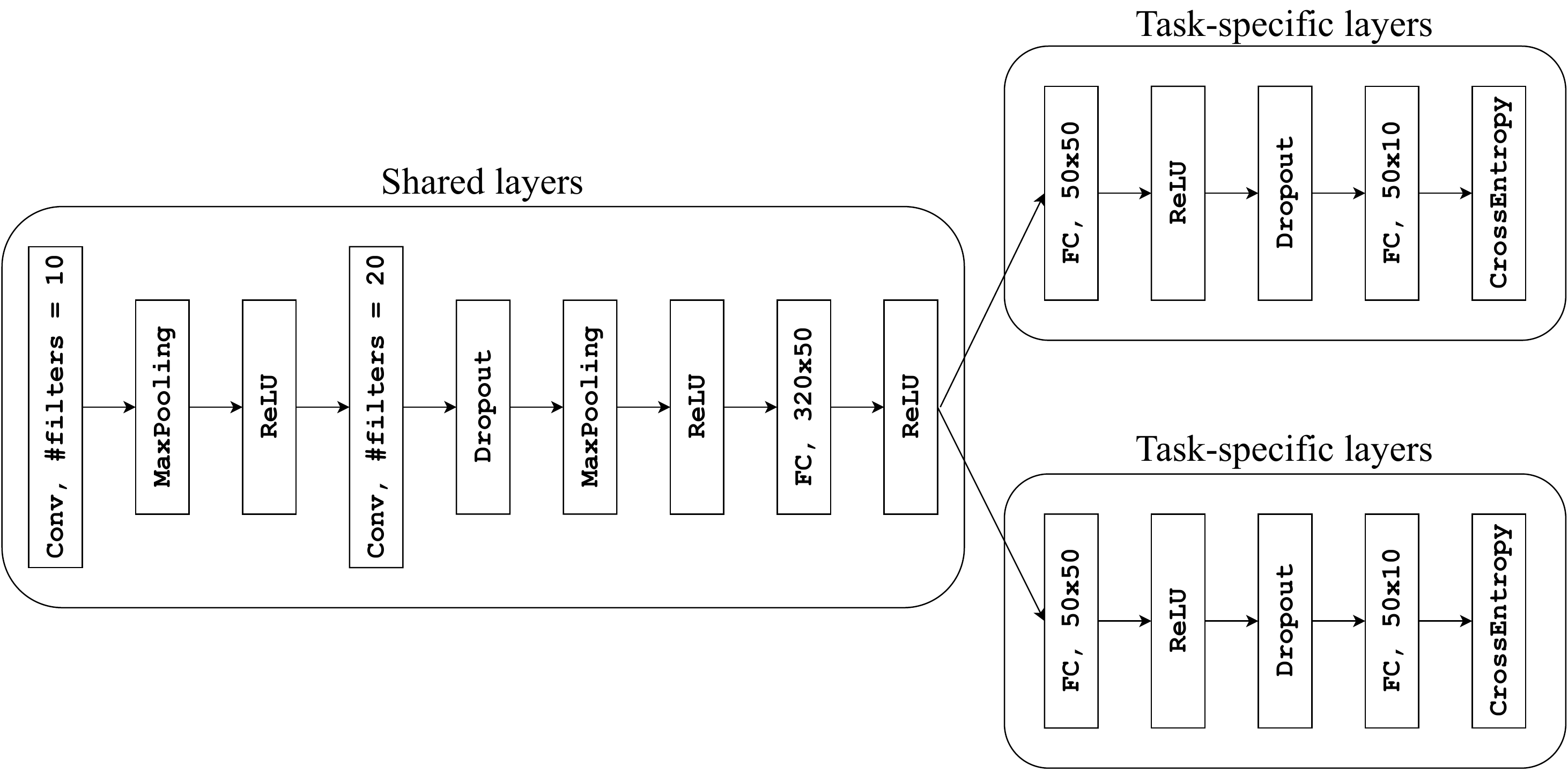}
    \caption{Modified LeNet neural network to solve multi-task learning problem. 
    The kernel size in \texttt{Conv} layers is $5$. 
    The kernel size in \texttt{MaxPooling} layers is 2. 
    Probability to zero an element in \texttt{Dropout} layers is~$0.2$. 
    Denote by \texttt{FC, $\mathtt{d_{in}}$x$\mathtt{d_{out}}$} fully-connected layer, where the input dimension is $\mathtt{d_{in}}$ and the output dimension is $\mathtt{d_{out}}$.}
    \label{harmgrad::fig::lenet}
\end{figure}
Following the work~\cite{sener2018multi}, we update parameters in shared and task-specific layers differently.
The parameters of the shared layers are updated based on the multi-objective optimization methods since these parameters affect losses corresponding to both tasks simultaneously.
The parameters of the task-specific layers are updated with the SGD method since these parameters affect only single-task losses.

In the presented experiments, we illustrate the robustness of the EDM to the multi-scale losses in contrast to the MGDA method.
Denote by $\mathcal{L}_1$ and $\mathcal{L}_2$ cross-entropy losses for the first and the second tasks, respectively. 
To control the scale of the loss $\mathcal{L}_2$, we introduce a hyper-parameter $\kappa \geq 1$ that is multiplied by the loss $\mathcal{L}_2$. 
In this setting we minimize the losses $\mathcal{L}_1$ and $\hat{\mathcal{L}}_2 = \kappa \mathcal{L}_2$ concurrently.
The larger $\kappa$, the larger loss $\hat{\mathcal{L}}_2$.
In particular, if $\kappa = 1$, then losses $\mathcal{L}_1$ and $\mathcal{L}_2$ are minimized as they are. 
Note that the magnitudes of both losses are approximately the same for the considered dataset and neural network.
At the same time, if $\kappa = 10$, then the loss $\hat{\mathcal{L}}_2$ becomes ten times larger.
We expect that in this case, EDM ensures more robust training than MGDA and, consequently, higher test accuracy.
Also, we compare EDM with the single-task approach~\cite{sener2018multi}.
This approach is based on two identical task-specific neural networks such that every neural network solves a particular task, i.e., classification of the top-left or the bottom-right image.
The architecture of these neural networks coincide with the LeNet modification in Figure~\ref{harmgrad::fig::lenet}, but including only one block of task-specific layers since every network solves only one task. 
The single-task neural networks are trained with the vanilla SGD method for a fair comparison with EDM and MGDA.

Table~\ref{harmgrad::tab::multimnist} presents the test accuracy obtained by the considered methods for $\kappa=1$ and $\kappa=50$. 
In this experiment we use learning rate $s = 0.05$, batch size $256$ and $25$ epochs.
We show in Table~\ref{harmgrad::tab::multimnist_kappa1} that even for $\kappa=1$ EDM is more or equally accurate in both tasks compared with MGDA and the single-task approach.
The desired property of the multi-objective optimization method to be robust to different scales of individual losses is more clear from Table~\ref{harmgrad::tab::multimnist_kappa50}.
This table corresponds to the setting, where loss $\mathcal{L}_2$ is multiplied by the factor $\kappa = 50$.
Test accuracy in both classes given by EDM are significantly higher than test accuracy corresponding to MGDA and single teask approach.
Naturally, the single-task approach gives the same test accuracies for the top-left class for both values of $\kappa$, since the corresponding loss $\mathcal{L}_1$ is unchanged.


\begin{table}[!h]
    \centering
    \caption{Test accuracies given by the considered methods for both classes. 
    The reported mean values and standard deviations are computed from three random initializations of the considered model.}
    \begin{subtable}{\textwidth}
    \centering
    \begin{tabular}{cccc}
    Class & EDM & MGDA & Single task \\
    \toprule
    Top-left &  $0.9562 \pm 0.0011$ & $\mathbf{0.9565 \pm 0.0005}$ & $0.9482 \pm 0.0021$  \\
    Bottom-right & $\mathbf{0.9413 \pm 0.0011}$ & $0.9400 \pm 0.0004$ & $0.9306 \pm 0.0021$\\
\end{tabular}
\caption{$\kappa = 1$}
\label{harmgrad::tab::multimnist_kappa1}
\end{subtable}
    \begin{subtable}{\textwidth}
    \centering
    \begin{tabular}{cccc}
    Class & EDM & MGDA & Single task \\
    \toprule
    Top-left & $\bf{0.9552 \pm 0.0050}$ & $0.1032  \pm 0.0073 $ & $0.9482 \pm 0.0021$ \\
    Bottom-right & $\bf{0.8639 \pm 0.0801}$ & $0.1027 \pm 0.0034 $ & $0.1075 \pm 0.0000$\\
\end{tabular}
\caption{$\kappa = 50$}
\label{harmgrad::tab::multimnist_kappa50}
\end{subtable}
    \label{harmgrad::tab::multimnist}
\end{table}



\section{Related works}
Multi-objective optimization problems come from many applications, where the target vector is evaluated by more than one loss function.
Examples of such applications are computer networks~\cite{donoso2016multi}, energy saving~\cite{cui2017multi}, engineering applications~\cite{chiandussi2012comparison,marler2004survey}, etc.
In machine learning and deep learning, models are typically trained by minimizing some pre-defined loss function computed on the training dataset.
However, the quality of the trained model is additionally evaluated according to external criteria.
For example, the coefficient of determination in regression problems estimates the ratio of the dependent variable variance that is explained by the trained model~\cite{helland1987interpretation}. 
In classification problems, AUC score close to one indicates the high quality of the trained  model~\cite{vanderlooy2008critical}.
In deep learning, neural networks for image classification can be evaluated on the robustness against the adversarial attacks~\cite{yuan2019adversarial,tursynbek2020geometry}.
Also, recently proposed neural ODE models can be compared not only based on the primal quality measure but also based on the smoothness of the trained dynamic~\cite{gusak2020towards}.
Although, most of the external criteria to evaluate the quality of models depend on the discrete variables, gradient-free methods to solve multi-objective discontinuous optimization problems are proposed~\cite{deb2001multi}.
One of the most common approaches are genetic algorithms~\cite{deb2002fast,von2014survey,abraham2005evolutionary}, particle swarm optimization methods~\cite{wang2009particle} and other nature-inspired heuristics~\cite{coello2009advances,omkar2011artificial}. 
These methods randomly explore the search space to find Pareto-optimal points and mostly suffer from the absence of any guarantees on the convergence to the Pareto-optimal point.

We focus on the unconstrained multi-objective optimization problems, where the loss functions are differentiable.
Methods to solve such problems can use gradients of individual losses.
Besides the weighting sum method that is typically used in applications~\cite{liu2016gradient}, the modifications of the standard methods for single-objective optimization problems are proposed.  
For example, the steepest descent method for multi-objective optimization problems is proposed in~\cite{fliege2000steepest}.
This method requires solving the auxiliary optimization problem in every iteration to get descent direction, which is similar to MGDA.
Extension of this approach to the problems with box constraints is presented in~\cite{miglierina2008box}.
Also, there is given the interpretation of multi-objective optimization problems from the dynamical system theory perspective.
Further, the proximal method to solve multi-objective optimization problems is proposed in~\cite{bonnel2005proximal}, but without any numerical comparison with other methods.
One more approach to solving multi-objective problems is the generalized homotopy approach~\cite{hillermeier2001generalized}.
It represents Pareto-optimal points as a differentiable manifold and generates new Pareto-optimal points through numerical evaluation of a local chart of this manifold.

%

\section{Conclusion}
This study considers multi-objective optimization problems, where loss functions are of different scales. 
To solve problems with such property, we  propose the Equiangular Direction Method (EDM) and proof that it guarantees equal relative decrease of every loss function.
Thus, EDM is robust to multi-scale losses.
We illustrate the performance of the EDM in solving the imbalanced classification and multi-task learning problems.
The proposed method provides the highest test accuracy compared with other approaches to solve considered problems.

\bibliographystyle{siamplain}
\bibliography{main}
\end{document}